\numberwithin{equation}{section}
\newtheorem{theorem}{Theorem}
\newtheorem{lemma}[theorem]{Lemma}
\newtheorem{definition}[theorem]{Definition}
\numberwithin{equation}{section}
\begin{document}

\title{Conditional expectation on non-commutative $H^{(r,s)}_{p}(\mathcal A;\ell_{\infty})$ and
$H_{p}(\mathcal A;\ell_{1})$ spaces : semifinite case}

\author{$^{1}$D. Dauitbek}
\email{dauitbek@math.kz}
\address{Al-Farabi Kazakh National University, 050040, Almaty, Kazakhstan;
Institute of Mathematics and Mathematical Modeling, 050010, Almaty, Kazakhstan.
}

\author{$^{2}$K. Tulenov}
 \email{tulenov@math.kz}
\address{Al-Farabi Kazakh National University, 050040, Almaty, Kazakhstan;
Institute of Mathematics and Mathematical Modeling, 050010, Almaty, Kazakhstan.}

\thanks{Corresponding authors: $^{1,2}$Al-Farabi Kazakh National University, 050040, Almaty, Kazakhstan;
$^{1,2}$Institute of Mathematics and Mathematical Modeling, 050010, Almaty, Kazakhstan. email: $^{1}$dauitbek@math.kz and $^{2}$tulenov@math.kz}


\subjclass[2010]{46L51, 46L52.}
\keywords{semifinite von Neumann algebra, semifinite subdiagonal algebra, non-commutative Hardy spaces, conditional expectation.}
\date{}
\begin{abstract} In this paper we investigate the conditional expectation on the non-commutative $H^{(r,s)}_{p}(\mathcal A;\ell_{\infty})$ and
$H_{p}(\mathcal A;\ell_{1})$ spaces associated with semifinite subdiagonal algebra, and prove  the contractibility of the underlying conditional
expectation on these spaces.
\end{abstract}

\maketitle

\section{Introduction}

Let $\mathcal{M}$ be a semifinite von Neumann algebra equipped with a faithful normal semifinite trace $\tau$ and let $\mathcal D$ be a subalgebra of the $\mathcal{M}.$ Let $L_{p}(\mathcal M)$ and $L_{p}(\mathcal D)$ be the corresponding non-commutative $L_p$-spaces, respectively. A conditional expectation $\mathbb{E}:\mathcal M\rightarrow\mathcal D$ is a unique
 normal faithful map such that
 $\tau\circ\mathbb{E}=\tau$.
It is well-known that the conditional expectation  $\mathbb{E}$ extends to a contractive projection from $L_{p}(\mathcal M)$ onto $L_{p}(\mathcal D)$ for every $1\leq p\leq\infty.$ In
general, $\mathbb{E}$ cannot be continuously extended to
$L_{p}(\mathcal M)$ for $p<1.$
In \cite{A}, Arveson introduced the notion of finite, maximal, subdiagonal algebras $\mathcal A$ of $\mathcal M,$ as non-commutative analogues of weak*-Dirichlet algebras, for von Neumann algebra $\mathcal M$ with a faithful normal finite trace. Subsequently several authors studied the (non-commutative) $H_p$-spaces associated with such algebras. For references see
\cite{A, BXu, B1, BL12008, BL2006, Ex, Ji, L2005, MW, P, PX2003, S} (see also \cite{BS, BTD, STZ, T1,T2} for recent studies) and many other sources, whereas more
references on previous works can be found in the survey paper
\cite{PX2003}. It was proved in \cite{BXu} that $\mathbb{E}$
is a contractive projection from $H_{p}(\mathcal A)$ onto $L_{p}(\mathcal D)$ for $p<1$.
Moreover, in \cite{B1} it was studied non-commutative $H_p$-spaces associated with semifinite subdiagonal aslgebra. The main objective of such paper is to obtain a generalization from above cited papers for the semi-finite case.

In \cite{BTD}, authors obtained that  there is a contractive projection from
$H_{p}^{(r,s)}(\mathcal A;\ell_{\infty})$ onto $L_{p}^{(r,s)}(\mathcal D;\ell_{\infty})$ for
$0<p,r,s\leq\infty$ (respectively, from $H_{p}(\mathcal A;\ell_{1})$ onto $L_{p}(\mathcal D;\ell_{1})$ for $0<p\leq\infty$) and when subdiagonal algebra is finite.
 The main goal of this paper is to extend those results to the semifinite case.
Addressing precisely this framework, our main results, Theorems \ref{elinfty} and \ref{el1}, provide a complete description of these results, thereby complementing \cite[Theorem 3 and Theorem 4]{BTD}. The main technical tool of the paper, which enables us to obtain such results, is provided by Proposition 3.1 in \cite{B1}.

\section{Preliminaries}\label{Preliminaries}

Let $\mathcal{M}$ be a semifinite von Neumann algebra on a separable Hilbert space $H$ equipped with a faithful normal semifinite trace $\tau.$
A closed and densely defined operator $x$ affiliated with $\mathcal{M}$ is called $\tau$-measurable if $\tau(E_{|x|}(s,\infty))<\infty$ for sufficiently large $s.$ We denote the set of all $\tau$-measurable operators by
$S(\mathcal{M},\tau).$ Let $\mathcal D$ be a von Neumann
 subalgebra of $\mathcal M$, and let $\mathbb{E}:\mathcal M\rightarrow\mathcal D$ be the unique
 normal faithful conditional expectation such that
 $\tau\circ\mathbb{E}=\tau$.
\begin{definition}\label{subdiag}
 A semifinite subdiagonal algebra of $\mathcal M$ with
 respect to $\mathbb{E}$ is a $w^{*}$-closed subalgebra $\mathcal A$ of $\mathcal M$
 satisfying the following conditions
\begin{enumerate}[\rm(i)]
  \item $\mathcal A+J(\mathcal A)$ is $w^{*}$-dense in $\mathcal M$;
  \item $\mathbb{E}$ is multiplicative on $\mathcal A$, i.e.,
 $\mathbb{E}(xy)=\mathbb{E}(x)\mathbb{E}(y)$ for all $x,y\in \mathcal A$;
  \item $\mathcal A\cap J(\mathcal A)=\mathcal D,$
 where $J(\mathcal A)$ is the family of all adjoint elements of the
 element of $\mathcal A$, i.e., $J(\mathcal A)=\{x^{*}: x\in \mathcal A\}.$
\end{enumerate}
\end{definition}
 The algebra $\mathcal D$ is called the diagonal of $\mathcal A$. It's proved by
 Ji \cite{Ji} (see also \cite{Ex} for the finite case) that a semifinite subdiagonal algebra $\mathcal A$ is
 automatically maximal in the sense that if $\mathcal{B}$ is another
 subdiagonal algebra with respect to $\mathbb{E}$ containing $\mathcal A$, then
 $\mathcal{B}=\mathcal A$. This maximality yields the following useful
 characterization of $\mathcal A$:
 \begin{equation}\label{eq:subdiag} \mathcal A=\{x\in\mathcal M: \tau(xy)=0, \forall y\in \mathcal A_{0}\} \end{equation}
where $\mathcal A_{0}=\mathcal A\cap\ker\mathbb{E}$ (see \cite{A}).

Let $0<p<\infty.$ Then we define the noncommutative $L_{p}$-spaces associated with $(\mathcal M,\tau)$ as follows
 $$L_{p}(\mathcal M)=\{x\in S(\mathcal{M},\tau):\tau(|x|^p)<\infty\}$$
 with the quasi-norm $$\|x\|_p=(\tau(|x|^p))^{\frac{1}{p}}.$$

Recall that $L_{\infty}(\mathcal M)=\mathcal M,$ equipped with the operator norm. For more details on these spaces we refer the reader to \cite{PX2003}. Now we will define noncommutative $H_p$-space (see\cite{B1}).

 \begin{definition}\label{hp}
 For $0<p\leq\infty$ we define the noncommutative $H_p$-space :
 $$H_{p}(\mathcal A)=\emph{closure of}\ \mathcal A\cap L_{p}(\mathcal M)\  \emph{in}\ L_{p}(\mathcal M),$$
 $$H_{p}^{0}(\mathcal A)=\emph{closure of}\ \mathcal A_{0}\cap L_{p}(\mathcal M)\  \emph{in}\ L_{p}(\mathcal M).$$
\end{definition}

 These are so called Hardy spaces associated with semifinite subdiagonal algebra $\mathcal A.$ They are noncommutative extensions of the classical Hardy
 space on the torus $\mathbb{T}.$ These
noncommutative Hardy spaces have received a lot of attention since
Arveson's pioneer work. For references see
\cite{A, B1, B2, BS, BTD, BL12008, BL2006, P, MW, PX2003, S, STZ, T1, T2}, whereas more
references on previous works can be found in the survey paper
\cite{PX2003}.

  The theory of vector-valued
 noncommutative $L_{p}$-spaces are introduced by Pisier in
 \cite{P1998} for the case $\mathcal M$ is hyperfinite. Junge introduced these spaces for general setting in \cite{J} (see also \cite{DJ, D, JXu}). Let $0< p,\;r,\;s\leq\infty$ such that $1/p=1/r+1/s$. Define the space
 $$L_{p}^{(r,s)}(\mathcal M,\ell_{\infty})$$
of all sequences $g=\{g_{n}\}_{n\geqslant1}$ of operators in $L_{p}(\mathcal M)$ for which there is a bounded sequence $x=\{x_{n}\}_{n\geqslant1}$ in $\mathcal M$ and operators $y\in L_{r}(\mathcal M)$ and $z\in L_{s}(\mathcal M)$ such that
$$
g_n=yx_nz,\quad \forall\; n\ge1.
$$
Put
$$
\|g\|_{p;(r,s)}=\inf\|y\|_r\sup_n\|x_n\|_\infty\|z\|_s,
$$
where the infimum runs over all possible decompositions of $g=\{g_{n}\}_{n\geqslant1}$ as above. Then $L_{p}^{(r,s)}(\mathcal M,\ell_{\infty})$ is a Banach space whenever $r,s\ge2$ (see \cite{DJ}). Similar way one can prove $L_{p}^{(r,s)}(\mathcal M,\ell_{\infty})$ is a quasi-Banach space whenever $r,s>0$. We put
$$
L_{p}^{right}(\mathcal M,\ell_{\infty})=L_{p}^{(\infty,p)}(\mathcal M,\ell_{\infty}), \quad L_{p}^{left}(\mathcal M,\ell_{\infty})=L_{p}^{(p,\infty)}(\mathcal M,\ell_{\infty})
$$
and
$$
L_{p}(\mathcal M,\ell_{\infty})=L_{p}^{(2p,2p)}(\mathcal M,\ell_{\infty}).
$$

Given  $0<p\leq \infty$,  a sequences $x=\{x_{n}\}_{n\geqslant1}$ belongs to  $L_{p}(\mathcal M;\ell_{1})$ if  there are $\{u_{kn}\}_{k,n\geqslant1}$ and
$\{v_{nk}\}_{n,k\geqslant1}$ in $L_{2p}(\mathcal M)$  such that
$$ x_{n}=\sum_{k=1}^{\infty}u_{kn}v_{nk}$$
for all $n$ and
$$ \sum_{k,n=1}^{\infty}u_{kn}u_{kn}^{*}\in L_{p}(\mathcal M) \quad
 \sum_{n,k=1}^{\infty}v_{nk}^{*}v_{nk}
\in L_{p}(\mathcal M).
$$
Here all series are required to be convergent in $L_p(\mathcal M)$ (relative to the w*-topology
in the case of $p =\infty$). $L_p(\mathcal M; \ell_1)$ is a quasi-Banach space (Banach space whenever $p\ge1$)  when equipped with the norm

\begin{eqnarray*} \|x\|_{L_{p}(\mathcal M;\ell_{1})}=\inf
\{\|\sum_{k,n=1}^{\infty}u_{kn}u_{kn}^{*}\|_{p}^{1/2}\cdot\|\sum_{n,k=1}^{\infty}v_{nk}^{*}v_{nk}\|_{p}^{1/2}\},
\end{eqnarray*} where the infimum runs over all decompositions of $(x_n)$ as
above.

 We now  define the spaces $H^{(r,s)}_{p}(\mathcal A;\ell_{\infty})$ and
$H_{p}(\mathcal A;\ell_{1})$ (see \cite{BTD} for the finite case) by a similar way.

\begin{definition}\label{Hp-space} Let $0< p,\;
r,\;s\leq\infty$ such that $1/p=1/r+1/s.$
\begin{enumerate}[\rm(i)]
  \item  We define $H_{p}^{(r,s)}(\mathcal A,\ell_{\infty})$ as the space of all
sequences $g=\{g_{n}\}_{n\geqslant1}$ in $H_{p}(\mathcal A)$ which admit a
factorization of the following form:

there are $y\in H_{r}(\mathcal A),$
$z\in H_{s}(\mathcal A)$ and a bounded sequence $x=\{x_{n}\}_{n\geqslant1}\subset \mathcal A$ such
that
\begin{eqnarray*}
 g_{n}=yx_{n}z, \quad\forall n\geqslant1.
\end{eqnarray*}
Put
\begin{eqnarray*}
\|g\|_{H_{p}^{(r,s)}(\mathcal{A},\ell_{\infty})}=\inf\{\|y\|_{H_r(\mathcal A)}\sup_{n}\|x_{n}\|_{H_{\infty}(\mathcal A)}\|z\|_{H_{s}(\mathcal A)}\},
\end{eqnarray*}
where the infimum runs over all factorizations of $\{g_{n}\}_{n\geqslant1}$ as
above. The spaces \begin{eqnarray*}
H_{p}^{right}(\mathcal A;\ell_{\infty})=H_{p}^{(\infty,p)}(\mathcal A;\ell_{\infty})\end{eqnarray*}
and \begin{eqnarray*}
H_{p}^{left}(\mathcal A;\ell_{\infty})=H_{p}^{(p,\infty)}(\mathcal A;\ell_{\infty})\end{eqnarray*}
of all sequences $\{g_{n}\}_{n\geqslant1}$ which allow
uniform factorizations $g_{n}=x_{n}z$ and $g_{n}=yx_{n}$ with
$y,z\in H_{p}(\mathcal A)$ and a bounded sequence $\{x_{n}\}_{n\geqslant1}\subset \mathcal A$,
respectively. Moreover, in the symmetric case put \begin{eqnarray*}
H_{p}(\mathcal A;\ell_{\infty})=H_{p}^{(2p,2p)}(\mathcal A;\ell_{\infty}).\end{eqnarray*}
 \item Let $0< p\leqslant \infty$. We define $H_{p}(\mathcal A;\ell_{1})$ as the
space of all sequences $x=\{x_{n}\}_{n\geqslant1}$ in $H_{p}(\mathcal A)$ which
can be decomposed as \begin{eqnarray*} x_{n}=\sum_{k=1}^{\infty}u_{kn}v_{nk},
\forall n\geqslant1
\end{eqnarray*} for two families $\{u_{kn}\}_{k,n\geqslant1}$ and
$\{v_{nk}\}_{n,k\geqslant1}$ in $H_{2p}(\mathcal A)$ such that
 \begin{eqnarray*} \sum_{k,n=1}^{\infty}u_{kn}u_{kn}^{*}\in L_{p}(\mathcal M) \, and \,
 \sum_{n,k=1}^{\infty}v_{nk}^{*}v_{nk}
\in L_{p}(\mathcal M).\end{eqnarray*}
Here all series are required to be convergent in $L_p(\mathcal M)$ (relative to the w*-topology
in the case of $p =\infty$).
It is equipped with the norm
\begin{eqnarray*} \|x\|_{H_{p}(\mathcal A;\ell_{1})}=\inf
\{\|\sum_{k,n=1}^{\infty}u_{kn}u_{kn}^{*}\|_{L_p(\mathcal M)}^{1/2}\|\sum_{n,k=1}^{\infty}v_{nk}^{*}v_{nk}\|_{L_p(\mathcal M)}^{1/2}\},
\end{eqnarray*} where the infimum runs over all decompositions of $x$ as
above.
\end{enumerate}

\end{definition}

\section{Contractibility of $\widetilde{\mathbb{E}}$ on $H^{(r,s)}_{p}(\mathcal A;\ell_{\infty})$ and
$H_{p}(\mathcal A;\ell_{1})$ spaces}\label{main results}

In this section, we define a conditional expectation on $H^{(r,s)}_{p}(\mathcal A;\ell_{\infty})$ and
$H_{p}(\mathcal A;\ell_{1})$ spaces defined in the Definition \ref{Hp-space} and prove that it is a contractive projection on such spaces.
First, we need the following result.
\begin{lemma}\label{multiplicative}
$\mathbb{E}$ is multiplicative on Hardy spaces. More precisely, $\mathbb{E}(xy)=
\mathbb{E}(x)\mathbb{E}(y)$ for all $x\in H_p(\mathcal A)$ and $y\in H_q(\mathcal A)$ with $0<p,q\leqslant\infty.$
\end{lemma}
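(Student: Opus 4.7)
The plan is to exploit density of $\mathcal A\cap L_p(\mathcal M)$ in $H_p(\mathcal A)$ and propagate multiplicativity from the algebra level (Definition~\ref{subdiag}(ii)) to Hardy spaces by a Hölder-type approximation argument, using the contractive extension of $\mathbb{E}$ to $H_p(\mathcal A)$ provided by Proposition~3.1 of \cite{B1}.

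First, set $1/r=1/p+1/q$ (interpreting infinities in the standard way). By Definition~\ref{hp}, choose sequences $\{x_n\}\subset \mathcal A\cap L_p(\mathcal M)$ with $x_n\to x$ in $L_p(\mathcal M)$ and $\{y_n\}\subset \mathcal A\cap L_q(\mathcal M)$ with $y_n\to y$ in $L_q(\mathcal M)$. Since $\mathcal A$ is an algebra, $x_ny_n\in\mathcal A\cap L_r(\mathcal M)$, and by the noncommutative Hölder inequality
\[
\|x_ny_n-xy\|_r\leq\|x_n-x\|_p\,\|y_n\|_q+\|x\|_p\,\|y_n-y\|_q\longrightarrow 0,
\]
so $x_ny_n\to xy$ in $L_r(\mathcal M)$ (and the limit lies in $H_r(\mathcal A)$).

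Next, apply Definition~\ref{subdiag}(ii) at each level: $\mathbb{E}(x_ny_n)=\mathbb{E}(x_n)\mathbb{E}(y_n)$ for every $n$. By the contractive extension of $\mathbb{E}$ to $H_p(\mathcal A)$ and $H_q(\mathcal A)$ (this is where Proposition~3.1 of \cite{B1} is invoked — in particular handling $p,q<1$, where ordinary $L_p$-continuity of $\mathbb{E}$ fails), we have $\mathbb{E}(x_n)\to\mathbb{E}(x)$ in $L_p(\mathcal D)$ and $\mathbb{E}(y_n)\to\mathbb{E}(y)$ in $L_q(\mathcal D)$. Another application of Hölder gives $\mathbb{E}(x_n)\mathbb{E}(y_n)\to\mathbb{E}(x)\mathbb{E}(y)$ in $L_r(\mathcal D)$, and simultaneously the contractivity of $\mathbb{E}$ on $H_r(\mathcal A)$ yields $\mathbb{E}(x_ny_n)\to\mathbb{E}(xy)$ in $L_r(\mathcal D)$. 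Uniqueness of limits finishes the argument.

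The only delicate point is that when $p$ or $q$ is less than $1$, the conditional expectation is a priori defined only on $\mathcal M$, and one must argue that its extension to $H_p(\mathcal A)$ is continuous in the quasi-norm; this is exactly what the cited Proposition~3.1 of \cite{B1} supplies in the semifinite setting. A minor bookkeeping issue is the case $p=\infty$ or $q=\infty$, where one should replace norm-convergent approximations by $w^*$-convergent ones and use $w^*$-continuity of $\mathbb{E}$, but the same scheme of passing multiplicativity through the limit applies without change.
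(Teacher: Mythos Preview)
Your proof is correct and follows essentially the same route as the paper's: approximate by elements of $\mathcal A\cap L_p(\mathcal M)$, use multiplicativity of $\mathbb{E}$ on $\mathcal A$, and pass to the limit via H\"older and the contractive extension from \cite[Proposition~3.1]{B1}. You have in fact spelled out the limiting argument more carefully than the paper does (the paper simply states the conclusion follows ``immediately''); note also that for $p=\infty$ one has $H_\infty(\mathcal A)=\mathcal A$, so no $w^*$-approximation is actually needed.
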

\begin{proof}
Let $x\in H_p(\mathcal{A})$ and $y\in H_q(\mathcal{A}).$ Then, by Definition \ref{hp} there exist sequences $x_n\in L_p(\mathcal{M})\cap\mathcal{A}$ and $y_n\in L_q(\mathcal{M})\cap\mathcal{A}$ such that $\lim_{n\rightarrow\infty}\|x_n-x\|_{L_p(\mathcal{M})}=0$ and $\lim_{n\rightarrow\infty}\|y_n-y\|_{L_q(\mathcal{M})}=0.$ Hence, it follows from H\"older inequality (see \cite{PX2003}) that $x_ny_n\in L_r(\mathcal{M})\cap\mathcal{A},$ where $r$ is determined by $\frac1r=\frac1p+\frac1q.$ Letting $n\rightarrow\infty$, we obtain $xy\in H_r(\mathcal{A}).$ Thus, $\mathbb{E}(xy)$ is well defined. Then, the result follows immediately from the multiplicativity of $\mathbb{E}$ on $\mathcal{A}$ and \cite[Proposition 3.1]{B1}.
\end{proof}

Define a conditional expectation on $H_{p}^{(r,s)}(\mathcal A;\ell_{\infty})$ and $H_{p}(\mathcal A;\ell_{1})$ spaces as follows $$\widetilde{\mathbb{E}}:h=\{h_{n}\}_{n\geqslant1}\rightarrow\Big\{\mathbb{E}\left(h_{n}\right)\Big\}_{n\geqslant1},$$
where $\mathbb{E}$ is the conditional expectation on $H_{p}(\mathcal A)$ such that $\mathbb{E}:H_{p}(\mathcal{A})\rightarrow L_{p}(\mathcal{D}), \, 0<p\leq\infty.$
\begin{theorem}\label{elinfty}
Let $0<p,r,s\leq\infty$. Then
$$
\|\widetilde{\mathbb{E}}(h)\|_{L_{p}^{(r,s)}(\mathcal D;\ell_{\infty})}\le \|h\|_{H_{p}^{(r,s)}(\mathcal A;\ell_{\infty})},\quad\forall\;h=\{h_{n}\}_{n\geqslant1}\in
H_{p}^{(r,s)}(\mathcal A;\ell_{\infty}).
$$
\end{theorem}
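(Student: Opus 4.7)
The plan is to prove the bound factorization-by-factorization: take an arbitrary admissible decomposition $h_n=yx_nz$ witnessing membership of $h$ in $H_{p}^{(r,s)}(\mathcal A;\ell_{\infty})$, push it through $\mathbb{E}$ using multiplicativity on Hardy spaces, and read off a legitimate decomposition of $\widetilde{\mathbb{E}}(h)$ in $L_{p}^{(r,s)}(\mathcal D;\ell_{\infty})$ whose three factor-norms are individually controlled.

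More concretely, fix $h=\{h_n\}\in H_{p}^{(r,s)}(\mathcal A;\ell_{\infty})$ and pick $y\in H_r(\mathcal A)$, $z\in H_s(\mathcal A)$ and a bounded sequence $\{x_n\}\subset\mathcal A$ with $h_n=yx_nz$. I would first observe that $x_nz\in H_s(\mathcal A)$ (since $x_n\in\mathcal A\subset H_\infty(\mathcal A)$ and $z\in H_s(\mathcal A)$, the product lies in $H_s(\mathcal A)$ by H\"older, as in the proof of Lemma~\ref{multiplicative}), and then apply Lemma~\ref{multiplicative} twice:
\begin{equation*}
\mathbb{E}(h_n)=\mathbb{E}\bigl(y\cdot(x_nz)\bigr)=\mathbb{E}(y)\,\mathbb{E}(x_nz)=\mathbb{E}(y)\,\mathbb{E}(x_n)\,\mathbb{E}(z).
\end{equation*}
Setting $y':=\mathbb{E}(y)$, $z':=\mathbb{E}(z)$, $x_n':=\mathbb{E}(x_n)$, we have $y'\in L_r(\mathcal D)$, $z'\in L_s(\mathcal D)$ and $\{x_n'\}\subset\mathcal D$, so $\widetilde{\mathbb{E}}(h)_n=y'x_n'z'$ is an admissible decomposition in $L_{p}^{(r,s)}(\mathcal D;\ell_{\infty})$.

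The three norm estimates then follow from the known contractivity of $\mathbb{E}$ on each relevant space: $\|\mathbb{E}(y)\|_r\le\|y\|_{H_r(\mathcal A)}$ and $\|\mathbb{E}(z)\|_s\le\|z\|_{H_s(\mathcal A)}$ come from the contractive projection $\mathbb{E}:H_p(\mathcal A)\to L_p(\mathcal D)$ for all $0<p\le\infty$ (the semifinite version, Proposition~3.1 of~\cite{B1}, as cited just above), while $\sup_n\|\mathbb{E}(x_n)\|_\infty\le\sup_n\|x_n\|_\infty$ is the standard contractivity on $\mathcal M$. Multiplying and taking the infimum over all factorizations of $h$ delivers
\begin{equation*}
\|\widetilde{\mathbb{E}}(h)\|_{L_{p}^{(r,s)}(\mathcal D;\ell_{\infty})}\le\|y\|_{H_r(\mathcal A)}\sup_n\|x_n\|_{H_\infty(\mathcal A)}\|z\|_{H_s(\mathcal A)},
\end{equation*}
which is the required inequality.

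I do not foresee a genuine obstacle here: once Lemma~\ref{multiplicative} is in hand, the argument is structural and essentially mimics the finite-trace proof of~\cite[Theorem 3]{BTD}. The only point that needs a line of justification is the two-step multiplicativity $\mathbb{E}(yx_nz)=\mathbb{E}(y)\mathbb{E}(x_n)\mathbb{E}(z)$, which requires verifying that the intermediate product $x_nz$ still lies in a Hardy space so that Lemma~\ref{multiplicative} is applicable; this is handled exactly as in the H\"older-based closure argument used in the proof of Lemma~\ref{multiplicative} itself.
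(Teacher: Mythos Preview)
Your proposal is correct and follows essentially the same route as the paper's own proof: pick a factorization $h_n=yx_nz$, use Lemma~\ref{multiplicative} to split $\mathbb{E}(h_n)=\mathbb{E}(y)\mathbb{E}(x_n)\mathbb{E}(z)$, invoke the contractivity of $\mathbb{E}$ on each factor via \cite[Proposition~3.1]{B1}, and then pass to the infimum (the paper phrases this last step as an $\varepsilon$-argument, but the content is identical). Your extra care in justifying the two-step multiplicativity via $x_nz\in H_s(\mathcal A)$ is a welcome detail that the paper leaves implicit.
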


\begin{proof} Let $h=\{h_{n}\}_{n\geqslant1}\in
H_{p}^{(r,s)}(\mathcal A;\ell_{\infty})$. Then for  $\varepsilon>0$ there
exist $x\in H_{r}(\mathcal A),\; y\in H_{s}(\mathcal A)$ and a bounded sequence $\{z_{n}\}_{n\geqslant1}\subset\mathcal A$  such that for all $n$,
$h_{n}=yx_{n}z$, and
\begin{eqnarray*}\|\{h_{n}\}_{n\geqslant1}\|_{H_{p}^{(r,s)}(\mathcal A;\ell_{\infty})}+\varepsilon\geqslant\|y\|_{H_{r}(\mathcal A)}\sup_{n}\|x_{n}\|_{H_{\infty}(\mathcal A)}\|z\|_{H_{s}(\mathcal A)}.\end{eqnarray*}
Hence, by Lemma~\ref{multiplicative} and \cite[Proposition 3.1]{B1}, we have

\begin{eqnarray*}
\mathbb{E}(h_{n})=\mathbb{E}(yx_{n}z)=\mathbb{E}(y)\mathbb{E}(x_{n})\mathbb{E}(z),
\end{eqnarray*}
where
\begin{eqnarray*}\mathbb{E}(y)\in L_{r}(\mathcal D),\,\,\ \mathbb{E}(x_{n})\in\mathcal D, \,\,\,\ \mathbb{E}(z)\in
L_{s}(\mathcal D)\end{eqnarray*} and \begin{eqnarray*}\|\mathbb{E}(y)\|_{L_{r}(\mathcal D)}\leq\|y\|_{H_{r}(\mathcal A)},\|\mathbb{E}(x_{n})\|_{L_{\infty}(\mathcal D)}\leq\|x_{n}\|_{H_{\infty}(\mathcal A)},\|\mathbb{E}(z)\|_{L_{s}(\mathcal D)}\leq\|z\|_{H_{s}(\mathcal A)}.\end{eqnarray*}
Therefore,

\begin{eqnarray*}
  \|\widetilde{\mathbb{E}}(h)\|_{L_{p}^{(r,s)}(\mathcal D;\ell_{\infty})} &=& \|\{\mathbb{E}(h_n)\}_{n\geqslant1}\|_{L_{p}^{(r,s)}(\mathcal D;\ell_{\infty})} \\
  &\leq& \|\mathbb{E}(y)\|_{L_{r}(\mathcal D)}\sup_{n}\|\mathbb{E}(x_{n})\|_{L_{\infty}(\mathcal D)}\|\mathbb{E}(z)\|_{L_{s}(\mathcal D)}  \\
   &\leq&\|y\|_{H_{r}(\mathcal A)}\sup_{n}\|x_{n}\|_{H_{\infty}(\mathcal A)}\|z\|_{H_{s}(\mathcal A)}\\
   &\leq&\|\{h_{n}\}_{n\geqslant1}\|_{H_{p}^{(r,s)}(\mathcal A;\ell_{\infty})}+\varepsilon \\
   &=&\|h\|_{H_{p}^{(r,s)}(\mathcal A;\ell_{\infty})}+\varepsilon.
\end{eqnarray*}
Letting $\varepsilon\rightarrow0$ we obtain the desired
inequality.
\end{proof}

Let $x=\{x_n\}_{n\geqslant0}$ be a finite sequence in $L_p(\mathcal{M}),$ define
$$\left\|x\right\|_{L_{p}(\mathcal{M},\ell_2^C)}:=\left\|\left(\sum_{n\geqslant0}|x_{n}|^{2}\right)^{1/2}\right\|_{L_{p}(\mathcal{M})}.$$

\begin{lemma}\label{expect} Let $0<p\le\infty$. Then
$$
\|(\sum_{n=0}^{\infty}|\mathbb{E}(x_{n})|^{2})^{1/2}\|_{L_{p}(\mathcal D)}\leq\|(\sum_{n=0}^{\infty}|x_{n}|^{2})^{1/2}\|_{H_{p}(\mathcal A)},\quad \forall\; \{x_n\}_{n\geqslant1}\subset H_{p}(\mathcal A).
$$
\end{lemma}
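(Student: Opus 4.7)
The natural strategy is the standard column-matrix trick combined with the $p<1$ contractivity of the conditional expectation on $H_p$ furnished by Proposition 3.1 of \cite{B1}. Namely, I would embed the truncated sequence $(x_0,\dots,x_N)$ as the first column of a matrix over $\mathcal{M}$, apply that contractivity in the enlarged algebra, and then pass to $N\to\infty$.

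First, set $S_N:=\sum_{n=0}^{N}|x_n|^2$ and $T_N:=\sum_{n=0}^{N}|\mathbb{E}(x_n)|^2$; both are increasing sequences of positive operators. Since the right-hand side of the claimed inequality dominates $\|S_N^{1/2}\|_{L_p(\mathcal{M})}$ for every $N$, once we establish $\|T_N^{1/2}\|_{L_p(\mathcal{D})}\le\|S_N^{1/2}\|_{L_p(\mathcal{M})}$ the conclusion will follow by normality of $\tau$ (equivalently, the monotone-convergence property of the quasi-norm $\|\cdot\|_p$ on increasing nets of positive operators).

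Fix $N$ and pass to matrices. Consider $\widetilde{\mathcal{M}}:=M_{N+1}(\mathbb{C})\otimes\mathcal{M}$ equipped with the trace $\mathrm{Tr}\otimes\tau$. Then $\widetilde{\mathcal{A}}:=M_{N+1}(\mathbb{C})\otimes\mathcal{A}$ is a semifinite subdiagonal algebra of $\widetilde{\mathcal{M}}$ with respect to $\widetilde{\mathbb{E}}:=\mathrm{id}\otimes\mathbb{E}$, with diagonal $\widetilde{\mathcal{D}}:=M_{N+1}(\mathbb{C})\otimes\mathcal{D}$; the three conditions of Definition \ref{subdiag} transfer entrywise. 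Form the column $X:=\sum_{n=0}^{N}e_{n,0}\otimes x_n$. Since each $x_n\in H_p(\mathcal{A})$ is an $L_p(\mathcal{M})$-limit of elements of $\mathcal{A}\cap L_p(\mathcal{M})$, $X$ is an $L_p(\widetilde{\mathcal{M}})$-limit of elements of $\widetilde{\mathcal{A}}\cap L_p(\widetilde{\mathcal{M}})$, so $X\in H_p(\widetilde{\mathcal{A}})$. A direct computation gives
\[
X^{*}X=e_{0,0}\otimes S_N,\qquad \widetilde{\mathbb{E}}(X)^{*}\widetilde{\mathbb{E}}(X)=e_{0,0}\otimes T_N,
\]
whence $\|X\|_{L_p(\widetilde{\mathcal{M}})}=\|S_N^{1/2}\|_{L_p(\mathcal{M})}$ and $\|\widetilde{\mathbb{E}}(X)\|_{L_p(\widetilde{\mathcal{D}})}=\|T_N^{1/2}\|_{L_p(\mathcal{D})}$.

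Applying Proposition 3.1 of \cite{B1} to $\widetilde{\mathbb{E}}$ on $H_p(\widetilde{\mathcal{A}})$ yields $\|\widetilde{\mathbb{E}}(X)\|_{L_p(\widetilde{\mathcal{D}})}\le\|X\|_{L_p(\widetilde{\mathcal{M}})}$, which is the truncated inequality; sending $N\to\infty$ then concludes the argument. The main technical point requiring care is the limit step in the quasi-Banach range $p<1$, where one must invoke normality of $\|\cdot\|_p$ on monotone nets of positive operators; verifying the subdiagonal structure on $\widetilde{\mathcal{A}}$ is routine but should be stated explicitly, and no further ingredient beyond Proposition 3.1 of \cite{B1} is needed.
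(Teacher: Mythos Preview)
Your proposal is correct and follows essentially the same route as the paper: both embed the truncated sequence as the first column of a matrix over $\mathcal{M}$, verify that the matrix amplification $M_{N+1}(\mathcal{A})$ is a semifinite subdiagonal algebra with entrywise conditional expectation, apply Proposition~3.1 of \cite{B1} in the amplified algebra, and then let $N\to\infty$. Your write-up is in fact somewhat more careful than the paper's (you explicitly justify $X\in H_p(\widetilde{\mathcal{A}})$ and the monotone-limit step for $p<1$), but the underlying argument is the same.
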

\begin{proof} Let $\|(\sum_{n=0}^{\infty}|x_{n}|^{2})^{1/2}\|_{H_{p}(\mathcal A)}<\infty$ and let $\mathcal{N}=M_n(\mathcal{D})$ be the algebra of $n\times n$ matrices with entries from
$\mathcal{D}$ and $\mathcal{B}=M_n(\mathcal{A})$ be the algebra of $n\times n$ matrices with entries from $\mathcal{A}.$
For $x\in\mathcal{N}$ with entries $x_{i,j},$ define $\Phi(x)$ to be the matrix with entries $\mathbb{E}(x_{i,j})$
and $\nu(x)=\sum_{i=1}^{n}\tau(x_{i,i}).$ Then $(\mathcal{N},\nu)$ is a semifinite von Neumann algebra and $\mathcal{B}$ is a semifinite subdiagonal algebra of $(\mathcal{N},\nu)$. Let $x'=\{(x_1,\ x_2,\cdots,\ x_n,\ 0,\cdots)\}.$ Then by \cite[Proposition 3.1]{B1}, we have
\begin{equation}\label{contradiction}
\begin{split}
  \|(\sum_{k=0}^{n}|\mathbb{E}(x_{n})|^{2})^{1/2}\|_{L_{p}(\mathcal D)}&= \|\Phi(T(x'))\|_{L_{p}(\mathcal N)}\leqslant\|T(x')\|_{H_{p}(\mathcal B)}\\
  &=\|(\sum_{k=0}^{n}|x_{n}|^{2})^{1/2}\|_{H_{p}(\mathcal A)},\quad \forall\; \{x_n\}_{n\geqslant1}\subset H_{p}(\mathcal A)
\end{split}
\end{equation}

where $T$ is the map (see also \cite{B2}):
\begin{eqnarray*}
T:\{x_{n}\}_{n\geqslant1}\mapsto\left(
                                    \begin{array}{ccc}
                                      x_{1} & 0 & \cdots \\
                                      x_{2} & 0 & \cdots \\
                                      \vdots & \vdots& \ddots\\
                                    \end{array}
                                  \right).
\end{eqnarray*}

Letting $n\rightarrow\infty$ in \eqref{contradiction}, we obtain desired inequality.
\end{proof}

\begin{theorem}\label{el1}
Let $0<p\leq\infty$. Then
$$
\|\widetilde{\mathbb{E}}(h)\|_{L_{p}(\mathcal D;\ell_{1})}\leqslant\|h\|_{H_{p}(\mathcal A;\ell_{1})},\quad \forall\;h=\{h_{n}\}_{n\geqslant1}\in
H_{p}(\mathcal A;\ell_{1}).
$$
\end{theorem}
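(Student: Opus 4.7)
The strategy is to mirror the argument of Theorem \ref{elinfty} in the $\ell_1$-framework: take a near-optimal factorisation of $h$, push it through $\widetilde{\mathbb{E}}$ using the multiplicativity of $\mathbb{E}$ on Hardy spaces (Lemma \ref{multiplicative}), and then dominate the resulting quadratic sums by means of Lemma \ref{expect}. Given $\varepsilon>0$, I would first pick families $\{u_{kn}\}_{k,n\geq1},\{v_{nk}\}_{n,k\geq1}\subset H_{2p}(\mathcal A)$ with $h_n=\sum_{k}u_{kn}v_{nk}$ (convergence in $L_p(\mathcal M)$, or $w^*$ if $p=\infty$) and
$$\bigl\|\sum_{k,n}u_{kn}u_{kn}^*\bigr\|_{L_p(\mathcal M)}^{1/2}\bigl\|\sum_{n,k}v_{nk}^*v_{nk}\bigr\|_{L_p(\mathcal M)}^{1/2}\leq\|h\|_{H_p(\mathcal A;\ell_1)}+\varepsilon.$$
The $L_p$-continuity of $\mathbb{E}$ (Proposition~3.1 of \cite{B1}) combined with Lemma \ref{multiplicative} then yields
$$\mathbb{E}(h_n)=\sum_{k}\mathbb{E}(u_{kn})\mathbb{E}(v_{nk}),\qquad\mathbb{E}(u_{kn}),\mathbb{E}(v_{nk})\in L_{2p}(\mathcal D),$$
which already serves as a candidate $L_p(\mathcal D;\ell_1)$-decomposition of $\widetilde{\mathbb{E}}(h)$.

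The remaining task is to dominate the two square sums $\sum_{k,n}\mathbb{E}(u_{kn})\mathbb{E}(u_{kn})^*$ and $\sum_{n,k}\mathbb{E}(v_{nk})^*\mathbb{E}(v_{nk})$ in $L_p(\mathcal D)$ by their $\mathcal A$-analogues in $L_p(\mathcal M)$. After linearising the double indexing to a single index, the $v$-side estimate is an immediate instance of Lemma \ref{expect} applied at exponent $2p$. For the $u$-side one needs the ``row'' companion of Lemma \ref{expect}, namely $\|(\sum_{n}|\mathbb{E}(y_n)^*|^2)^{1/2}\|_{L_p(\mathcal D)}\leq\|(\sum_{n}|y_n^*|^2)^{1/2}\|_{L_p(\mathcal M)}$. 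I would obtain this by repeating the proof of Lemma \ref{expect} verbatim, only replacing the column map $T$ by the row embedding $\{y_n\}\mapsto(y_1,y_2,\ldots)$ into $M_\infty(\mathcal A)$ and invoking Proposition~3.1 of \cite{B1} for the matrix subdiagonal algebra $M_\infty(\mathcal A)$; alternatively, since the axioms of Definition \ref{subdiag} together with the $*$-preservation of $\mathbb{E}$ are symmetric under $J$, the conjugate algebra $J(\mathcal A)$ is itself a semifinite subdiagonal algebra for $\mathbb{E}$, so Lemma \ref{expect} applied to $J(\mathcal A)$ with the sequence $\{u_{kn}^*\}\subset H_{2p}(J(\mathcal A))$ delivers the required inequality after using $\mathbb{E}(u_{kn}^*)=\mathbb{E}(u_{kn})^*$. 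Multiplying the row and column bounds and letting $\varepsilon\to0$ finishes the argument.

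The only step that deserves genuine care is this row estimate, since Lemma \ref{expect} as stated covers only column sums $\sum|x_n|^2$; everything else, including the convergence of $\sum_{k}\mathbb{E}(u_{kn})\mathbb{E}(v_{nk})$ to $\mathbb{E}(h_n)$ (immediate from $L_p$-continuity of $\mathbb{E}$ and Lemma \ref{multiplicative}) and the passage to the infimum over decompositions, is a direct transcription of the Theorem \ref{elinfty} strategy to the $\ell_1$-framework.
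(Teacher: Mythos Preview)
Your proposal is correct and follows essentially the same route as the paper: choose a near-optimal $H_{2p}(\mathcal A)$ factorisation, split $\mathbb{E}(h_n)$ via Lemma~\ref{multiplicative}, bound the column square sum by Lemma~\ref{expect}, and handle the row square sum by applying Lemma~\ref{expect} to the subdiagonal algebra $J(\mathcal A)$ with the sequence $\{u_{kn}^*\}$. The paper's proof uses precisely your second alternative for the row estimate (the $J(\mathcal A)$ argument) and is otherwise identical to what you outline.
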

\begin{proof} Let $\{h_{n}\}_{n\geqslant1}\in
H_{p}(\mathcal A;\ell_{1}),$ then for  $\varepsilon>0$, there are
$\{u_{kn}\}_{k,n\geqslant1}$ and $\{v_{nk}\}_{n,k\geqslant1}$ in $H_{2p}(\mathcal A)$
such that \begin{eqnarray*}h_{n}=\sum_{k=1}^{\infty}u_{kn}v_{nk},\quad \forall
n\geqslant1, \end{eqnarray*} and
$(\sum_{k,n=1}^{\infty}u_{kn}u_{kn}^{\ast})^{\frac{1}{2}},\;(\sum_{n,k=1}^{\infty}v_{nk}^{\ast}v_{nk})^{\frac{1}{2}}\in
L_{2p}(\mathcal M)$, and
\begin{eqnarray*}\|\left\{h_{n}\right\}_{n\geqslant1}\|_{H_{p}(\mathcal A;\ell_{1})}+\varepsilon\geqslant\|\sum_{k,n=1}^{\infty}u_{kn}u_{kn}^{\ast}\|_{H_{p}(\mathcal A)}^{\frac{1}{2}}\cdot\|\sum_{k,n=1}^{\infty}v_{nk}^{\ast}v_{nk}\|_{H_{p}(\mathcal A)}^{\frac{1}{2}}.\end{eqnarray*}

Hence, by Lemma~\ref{multiplicative}
\begin{eqnarray*}
\mathbb{E}(h_{n})=\mathbb{E}(\sum_{k=1}^{\infty}u_{kn}v_{nk})=\sum_{k=1}^{\infty}\mathbb{E}(u_{kn}v_{nk})=\sum_{k=1}^{\infty}\mathbb{E}(u_{kn})\mathbb{E}(v_{nk}),\,\,\ \forall \,\ n.\end{eqnarray*}
Since $J(\mathcal A)$ is a subdiagonal algebra of $\mathcal A$,
using Lemma~\ref{expect} we obtain
\begin{eqnarray*}\left\|\widetilde{\mathbb{E}}(h)\right\|_{L_{p}(\mathcal D;\ell_{1})}&=&\left\|\left\{\mathbb{E}\left(h_{n}\right)\right\}_{n\geqslant1}\right\|_{L_{p}(\mathcal D;\ell_{1})} \\
&=&\left\|\left\{\sum_{k=1}^{\infty}\mathbb{E}(u_{kn})\mathbb{E}(v_{nk})\right\}_{n\geqslant1}
\right\|_{L_{p}(\mathcal D;\ell_{1})}\\
&\leqslant&\left\|\sum_{k,n=1}^{\infty}|\mathbb{E}(u_{kn}^{*})|^{2}\right\|_{L_{p}(\mathcal D)}^{1/2}\cdot\left\|\sum_{k,n=1}^{\infty}|\mathbb{E}(v_{nk})|^{2}\right\|_{L_{p}(\mathcal D)}^{1/2}\\
&\leqslant&\left\|\sum_{k,n=1}^{\infty}|u_{kn}^{*}|^{2}\right\|_{H_{p}(\mathcal A)}^{1/2}\cdot\left\|\sum_{k,n=1}^{\infty}|v_{nk}|^{2}\right\|_{H_{p}(\mathcal A)}^{1/2}\\
&\leqslant&\left\|\left\{h_{n}\right\}_{n\geqslant1}\right\|_{H_{p}(\mathcal A;\ell_{1})}+\varepsilon\\
&=&\left\|h\right\|_{H_{p}(\mathcal A;\ell_{1})}+\varepsilon.
\end{eqnarray*}

Letting $\varepsilon\rightarrow0$ we complete the proof.
\end{proof}


\begin{thebibliography}{99}


\bibitem{A} W.B. Arveson, Analiticity in operator algebras, {\it Amer.J.Math.} {\bf89} (1967), 578-642.

\bibitem{BXu} T.N. Bekjan and Quanhua Xu, Riesz and Szeg\"{o} type factorizations for
noncommutative Hardy spaces, {\it J. Operator Theory} {\bf 62:1}
(2009), 215-231.

\bibitem{B1} T.N. Bekjan,  Noncommutative Hardy space associated with semi-finite subdiagonal algebras, {\it J. Math.Anal.Appl.} {\bf 429} (2015), 1347-1369.

\bibitem{B2} T.N. Bekjan,  Noncommutative symmetric Hardy spaces, {\it Integral Equations Operator Theory} {\bf 81} (2015), 191-212.




\bibitem{BS} T.N. Bekjan, B.K. Sageman, A property of conditional expectation, {\it Positivity} {\bf 22}:5 (2018), 1359-1369.

\bibitem{BTD} T.N. Bekjan, K. Tulenov, D. Dauitbek, The noncommutative $H_p^{(r,s)}(\mathcal{A}; \ell_{\infty})$ and $H_p(\mathcal{A};\ell_{1})$ spaces, {\it Positivity} {\bf 19}:4 (2015), 877-891.



\bibitem{BL12008} D.P. Blecher, L.E. Labuschagne, Applications of
the Fuglede-Kadison determinant: Szeg\"{o}'s theorem and outers for
noncommutative $H^{p}$, {\it Trans. Amer. Math. Soc.} {\bf 360}
(2008), 6131-6147.

\bibitem{BL2006} D.P. Blecher, L.E. Labuschagne, Characterizations of noncommutative $H^{\infty}$, {\it Integral Equations Operator Theory}
{\bf 56} (2006), 301-321.



\bibitem{J}
 M. Junge, Doob's inequality for non-commutative martingales, {\it J. Reine Angew. Math.},
{\bf 549} (2002), 149-190.

\bibitem{DJ} A. Defant and M. Junge, Maximal theorems of Menchoff-Rademacher type in non-commutative $L_{q}$-spaces, {\it J.Funct. Anal.} {\bf 206} (2004), 322-355.


\bibitem{D} S. Dirksen, Noncommutative boyd interpolation theorems,
{\it Transactions of the american mathematical society}  {\bf 367}: 6 (2015),  4079-4110.

\bibitem{Ex} R. Exel, Maximal subdiagonal algebras,
{\it Amer.J.Math.} {\bf 110} (1988), 775-782.


\bibitem{Ji} G. Ji, Maximality of semifinite subdiagonal algebras, {\it J. Shaanxi Normal Univ. Nat. Sci. Ed.} {\bf28}:1 (2000), 15-17.

\bibitem{JXu} M. Junge and Q. Xu, Noncommutative maximal ergodic theorems,
 {\it J. Amer. Math. Soc.} {\bf 20} (2007), 385-439.

\bibitem{L2005} L.E. Labuschagne, A noncommutative Szeg\"{o} theorem for subdiagonal subalgebras of von Neumann algebras,
 {Proc. Amer. Math. Soc.} {\bf
133} (2005), 3643-3646.

\bibitem{MW} M. Marsalli, G. West, Noncommutative $H^{p}$ spaces, {J.Operator Theory} {\bf
40} (1998), 339-355



\bibitem{P} G. Pisier, \newblock Interpolation between $H^{p}$ spaces and non-commutative generalizations I,  { Pacific J. Math.} {\bf 155} (1992), 341-368.

\bibitem{P1998} G. Pisier, Non-commutative vector valued $L^p$-spaces and completely p-summing maps,
{\it Ast\'{e}risque} {\bf 247} (247), 1998.

5{\bf 189} (1997), 667-698.


\bibitem{PX2003} G. Pisier and Q. Xu, Noncommutative $L^p$-spaces,
{\it Handbook of the geometry of Banach spaces}, {\bf 2} (2003) 1459-1517,
2003.

\bibitem{S} K.S. Saito, A note on invariant subspaces for finite
maximal subdiagonal algebras, {\it Proc. Amer. Math. Soc.} {\bf
77} (1979), 348-352.

\bibitem{STZ} F. Sukochev, K. Tulenov, D. Zanin, Nehari-Type Theorem for Non-commutative Hardy Spaces, {\it 	J. Geom. Anal.} {\bf 27}:3 (1979), 1789-1802.

\bibitem{T1} K. S. Tulenov, Noncommutative vector-valued symmetric Hardy spaces, {\it Russian Mathematics} {\bf 59}:11 (2015), 74-79.

\bibitem{T2} K. Tulenov, Some properties of the noncommutative $H_p^{(r,s)}(\mathcal{A}; \ell_{\infty})$ and $H_p(\mathcal{A};\ell_1)$ spaces, {\it AIP Conference Proceedings} {\bf 1676}:020093 (2015).






\end{thebibliography}
\end{document}